\tikzstyle{block} = [rectangle, draw,
\tikzstyle{line} = [draw, -latex', thick]
\tikzset{node distance = 2cm}
\newtheorem{theorem}{Theorem}
\newtheorem{proposition}[theorem]{Proposition}
\begin{document}

\title[The convergence of sequences]
	{The convergence of sequences in terms of positive and alternating Perron expansions}

%    Information for first author
\author{M. Moroz %\orcidlink{0000-0001-6658-4924}
}
\address{Institute of Mathematics\\ 
	National Academy of Sciences of Ukraine\\
	Tereschenkivska 3\\
	Kyiv\\
	Ukraine}
\email{moroznik22@gmail.com}

%    General info
\subjclass{Primary 40A05; Secondary 11A67, 11K55}
\date{}
\dedicatory{}
\keywords{Convergence of sequnces, positive Perron expansion, alterna\-ting Perron expansion, L\"{u}roth expansion, alternating L\"{u}roth expansion, Engel expansion, Sylvester expansion, Pierce expansion}
\thanks{}

\begin{abstract}
We consider conditions for the convergence of sequences in terms of positive and alternating Perron expansions ($P$-representation and $P^-$-representation). These conditions are crucial to determine the continuity of functions that are defined using $P$-representation or $P^-$-representation of real numbers.
\end{abstract}

% English title, authors, and abstract
%\titleen{Representation of Real Numbers by Alternating Perron Series and Their Geometry}
%\authoren{M. P. Moroz}
%\abstracten{ }

\maketitle

%%%%%%%%%%%%%%%%%%%%%%%%%%%%%%%%%%%%%%%%%%%%%%%%

\section{Introduction}

In \cite{Moroz2024}, the author proposed the representation of real numbers by positive Perron series \cite{Perron} ($P$-representation). This representation generalizes L\"{u}roth expansion \cite{ZhPr2012}, Engel expansion \cite{ERS1958}, Sylvester expansion \cite{ERS1958}, Dar\'oczy--K\'atai-Birthday expansion \cite{Galambos1998}, and others, and it is also related to Oppenheim expansions \cite{Galambos1976,Oppenheim1972,SydorukTorbin2017}. In \cite{Moroz2024PP}, the author proposed the representation of real numbers by alternating Perron series ($P^-$-representation), which generalizes Pierce expansion \cite{BPT2013,Shallit1986}, second Ostrogradsky expansion \cite{TorbinPratsyovyta2010}, alternating L\"{u}roth expansion \cite{KalpazidouKnopfmacher1990}, etc. Furthermore, in \cite{Moroz2024PP}, it is proved that when calculating the Lebesgue measure of sets defined using Perron expansions (both positive and alternating), it is sufficient to consider only positive Perron expansions. In other words, Lebesgue measure problems defined in terms of the $P^-$-representation can be reduced to similar problems defined using the $P$-representation, and are therefore not of particular scientific interest.

However, topological properties of the $P^-$-representation are fundamen\-tally different from properties of the $P$-representation. Therefore, the $P^-$-re\-presentation is as interesting as the $P$-representation from the perspective of theory of functions. Both representations can be effectively used to construct and investigate functions with locally complicated structures and fractal properties, such as nowhere monotonic or nowhere differentiable functions, singular functions, and others that at the same time are everywhere or almost everywhere continuous functions, transformations preserving measure or fractal dimension, etc. This is confirmed by the successful use of partial cases of the $P$-representation and the $P^-$-representation for such purposes (for example, see \cite{ABKP2013,BP2023,BPT2013,Moroz2017,Moroz2024}).

In this paper, we establish conditions for the convergence of sequences in terms of the $P$-representation and the $P^-$-representation of numbers. These conditions are important to determine the continuity of functions defined in terms of positive and alternating Perron expansions. Some of these conditions have already implicitly arisen during the investigation of certain functions \cite{ABKP2013,BP2023,BPT2013,Moroz2017}, but they have not yet been generalized to the case of Perron expansions or fully systematized. We hope that this work will fill this gap and prove useful in the investigation of fractal functions.

\section{General information about Perron expansions}

Let us consider functions $\varphi_n(x_1,\ldots,x_n)\colon\mathbb{N}^ n\rightarrow\mathbb{N}$ for all $n\in\mathbb{N}$, $\varphi_0\equiv\text{const}\in\mathbb{N}$. A fixed sequence of functions $(\varphi_n)_{n=0}^\infty$ is denoted by $P$. Each such sequence $P$ defines two types of Perron expansions of real numbers: the $P$-representation (positive Perron expansion) and the $P^-$-representation (alterna\-ting Perron expansion). 

\textbf{The $P$-representation of real numbers.} The $P$-representation of $x\in(0,1]$ is defined as the expansion of $x$ into a positive Perron series:
\begin{equation}\label{1}
	x=\sum_{n=0}^{\infty}\frac{r_0 \cdots r_n}{(p_1-1)p_1\cdots(p_n-1)p_n p_{n+1}},
\end{equation}
where $r_0=\varphi_0$, $r_n=\varphi_n(p_1,\ldots,p_n)$, and  $p_{n}\geq r_{n-1}+1$ for all $n\in\mathbb{N}$. 
The number $x$ and its $P$-representation \eqref{1} are briefly denoted by $\Delta^P_{p_1 p_2 \ldots}$. For each sequence $P$ of functions $\varphi_n$, every $x\in(0,1]$ has a unique $P$-representation {\cite[Theorem 1]{Moroz2024}}. The number $p_n=p_n(x)$ is called the $n$th $P$-digit of $x$.

The $P$-cylinder of rank $k$ with base $c_1 \ldots c_k$ is defined as the set $\Delta^P_{c_1 \ldots c_k}$ of all numbers $x\in(0,1]$ such that $p_1(x)=c_1,\ldots,p_k(x)=c_k$. It is known  {\cite[Lemma 3, Corollary 3]{Moroz2024}} that the $P$-cylinder $\Delta^P_{c_1 \ldots c_k}$ is a set of the type $(a,b]$. The infimum, supremum, and diameter of $P$-cylinder $\Delta^P_{c_1 \ldots c_k}$ are calculated using the following formulas:
\begin{gather}
\inf \Delta^P_{c_1 \ldots c_k}=\sum_{n=0}^{k-1}\frac{r_0\cdots r_{n}}{(c_1-1)c_1\cdots(c_{n}-1)c_{n}c_{n+1}},\label{infPerron2}\\
\sup \Delta^P_{c_1 \ldots c_k}=\inf \Delta^P_{c_1 \ldots c_k}+\frac{r_0 \cdots r_{k-1}}{(c_1-1)c_1\cdots(c_k-1)c_k}, \label{supPerron2}\\		
|\Delta^P_{c_1 \ldots c_k}|=\frac{r_0 \cdots r_{k-1}}{(c_1-1)c_1\cdots(c_k-1)c_k},\label{cylPerron2}
\end{gather}
where $r_0=\varphi_0$ and $r_n=\varphi_n(c_1,\ldots,c_n)$ for all $n=1,\ldots,k-1$. 

In more detail, the geometry of the $P$-representation is presented in \cite{Moroz2024}. Figures \ref{fig:1} and \ref{fig:2} illustrate the relative positions of $P$-cylinders, providing an idea of the geometry of the $P$-representation.

\begin{figure}[h]
\begin{center}
	\begin{tikzpicture}[out=145,in=35]
		\draw[{Circle[scale=.7]}-] (8.5,0) -- (5.55,0)
		node[pos=0.01,below] {1} node[pos=1,below] {$\frac{r_0}{r_0+1}$};
		\draw[shorten >=1.2pt][-{Straight Barb[width'=1.5,scale=2.5]}] (8.5,0) to node[above] {$\Delta^{P}_{r_0+1}$} (5.5,0);
		\draw[{Circle[scale=.7]}-] (5.55,0) -- (3.05,0)
		node[pos=1,below] {$\frac{r_0}{r_0+2}$};
		\draw[shorten >=1.2pt][-{Straight Barb[width'=1.5,scale=2.5]}] (5.5,0) to node[above] {$\Delta^{P}_{r_0+2}$} (3,0);
		\draw[{Circle[scale=.7]}-] (3.05,0) -- (1.55,0)
		node[pos=1,below] {$\frac{r_0}{r_0+3}$};
		\draw[shorten >=1.2pt][-{Straight Barb[width'=1.5,scale=2.5]}] (3,0) to node[above] {$\Delta^{P}_{r_0+3}$} (1.5,0);
		\draw[{Circle[scale=.7]}-{Circle[open,scale=.7]}] (1.55,0) -- node[above=1] {$\ldots$} node[below=2.5] {$\cdots$} (0,0)
		node[pos=0.97,below] {0};
	\end{tikzpicture}
\end{center}
	\caption{\small $P$-cylinders of rank $1$ inside $(0,1]$, $r_0=\varphi_0$.}
	\label{fig:1}
\end{figure}
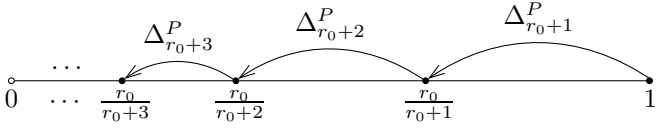

\begin{figure}[h]
\begin{center}
	\begin{tikzpicture}[out=145,in=35]
		\draw[{Circle[scale=.7]}-] (8.5,0) -- (5.5,0)
		node[pos=0,below] {$\sup\Delta^{P}_{c_1\ldots c_k}$};
		\draw[shorten >=1.2pt][-{Straight Barb[width'=1.5,scale=2.5]}] (8.5,0) to node[above] {$\Delta^{P}_{c_1\ldots c_k [r_k+1]}$} (5.5,0);
		\draw[{Circle[scale=.7]}-] (5.55,0) -- (3.05,0);
		\draw[shorten >=1.2pt][-{Straight Barb[width'=1.5,scale=2.5]}] (5.5,0) to node[above] {$\Delta^{P}_{c_1\ldots c_k [r_k+2]}$} (3,0);
		\draw[{Circle[scale=.7]}-] (3.05,0) -- (1.55,0);
		\draw[shorten >=1.2pt][-{Straight Barb[width'=1.5,scale=2.5]}] (3,0) to node[above] {$\Delta^{P}_{c_1\ldots c_k [r_k+3]}$} (1.5,0);
		\draw[{Circle[scale=.7]}-{Circle[open,scale=.7]}] (1.55,0) -- node[above=1] {$\ldots$} (0,0)
		node[pos=1,below] {$\inf\Delta^{P}_{c_1\ldots c_k}$};
		\draw[out=-170,in=-10,shorten >=2pt] (8.5,0) to node[below] {$\Delta^{P}_{c_1\ldots c_k}$} (0,0);
	\end{tikzpicture}
\end{center}
	\caption{\small $P$-cylinders of rank $k+1$ inside the $P$-cylinder $\Delta^{P}_{c_1\ldots c_k}$ of rank $k$, $r_k=\varphi_k(c_1,\ldots,c_k)$.}
\label{fig:2}
\end{figure}
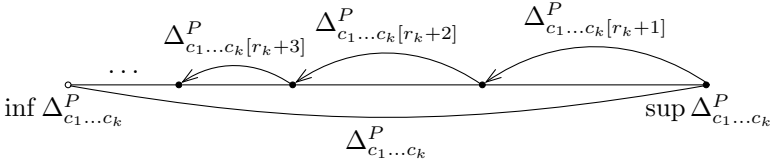

\textbf{The $P^-$-representation of real numbers.} The $P^-$-representation of $x\in(0,1]$ is defined as expansion of $x$ into an alternating Perron series:
\begin{equation}\label{2}
	x=\sum_{n=0}^{\infty}\frac{(-1)^n r_0 \cdots r_n}{(q_1-1)p_1\cdots(q_n-1)q_n (q_{n+1}-1)},
\end{equation}
where $r_0=\varphi_0$, $r_n=\varphi_n(q_1,\ldots,q_n)$, and $q_{n}\geq r_{n-1}+1$ for all $n\in\mathbb{N}$. The number $x$ and its $P^-$-representation \eqref{2} are briefly denoted by $\Delta^{P^-}_{q_1 q_2 \ldots}$.

For each sequence $P$ of functions $\varphi_n$, there exists a countably infinite set $IS^{P^-}\subset(0,1]$ whose elements do not have its $P^-$-representation. However, every $x\in(0,1)\setminus IS^{P^-}$ has a unique $P^-$-representation {\cite[Theorem 3.12]{Moroz2024PP}}. For $x\in(0,1)\setminus IS^{P^-}$, the number $q_n=q_n(x)$ is called the $n$th $P^-$-digit of $x$.

The $P^-$-cylinder of rank $k$ with base $c_1 \ldots c_k$ is defined as the set $\Delta^{P^-}_{c_1 \ldots c_k}$ of all numbers  $x\in(0,1)\setminus IS^{P^-}$ such that $q_1(x)=c_1,\ldots,q_k(x)=c_k$. It is known {\cite[Corollary 3.14]{Moroz2024PP}} that the $P^-$-cylinder $\Delta^{P^-}_{c_1 \ldots c_k}$ is a set of the type $(a,b)\setminus IS^{P^-}$. The infimum, supremum, and diameter of $P^-$-cylinder $\Delta^{P^-}_{c_1 \ldots c_k}$ are calculated using the following formulas:
\begin{itemize}
	\item if $k$ is odd, then
	\begin{gather}
		\sup \Delta^{P^-}_{c_1\ldots c_k}=\sum_{n=0}^{k-1}\frac{(-1)^n r_0 \cdots r_{n}}{(c_1-1)c_1\cdots (c_n-1)c_n (c_{n+1}-1)}, \label{sup1alternatingPerron2}\\
		\inf \Delta^{P^-}_{c_1 \ldots c_k}=\sup \Delta^{P^-}_{c_1 \cdots c_k}-\frac{r_0\cdots r_{k-1}}{(c_1-1)c_1\cdots (c_{k}-1)c_k},\label{inf1alternatingPerron2}
	\end{gather}
	\item if $k$ is even, then
	\begin{gather}
		\inf \Delta^{P^-}_{c_1 \ldots c_k}=\sum_{n=0}^{k-1}\frac{(-1)^n r_0 \cdots r_{n}}{(c_1-1)c_1\cdots (c_n-1)c_n (c_{n+1}-1)}, \label{inf2alternatingPerron2}\\		
		\sup \Delta^{P^-}_{c_1 \ldots c_k}=\inf \Delta^{P^-}_{c_1 \ldots c_k}+\frac{r_0\cdots r_{k-1}}{(c_1-1)c_1\cdots (c_{k}-1)c_k},\label{sup2alternatingPerron2}
	\end{gather}
	\item in both cases
	\begin{gather}
		|\Delta^{P^-}_{c_1 \ldots c_k}|=\frac{r_0\cdots r_{k-1}}{(c_1-1)c_1\cdots (c_{k}-1)c_k},\label{cylalternatingPerron2}	
	\end{gather}
\end{itemize}
where $r_0=\varphi_0$ and $r_n=\varphi_n(c_1,\ldots,c_n)$ for all $n=1,\ldots,k-1$. It is important that $IS^{P^-}$ is the set of infima and suprema of all $P^-$-cylinders.

In more detail, the geometry of the $P^-$-representation is presented in \cite{Moroz2024PP}. Figures \ref{fig:3}--\ref{fig:5} illustrate the relative positions of $P^-$-cylinders, providing an idea of the geometry of the $P^-$-representation. Note that the geometry of the $P^-$-representation is similar to that of continued fractions.

\begin{figure}[h]
\begin{center}
	\begin{tikzpicture}[out=145,in=35]
		\draw[dashed, {Circle[open,scale=.7]}-] (8.5,0) -- (5.55,0)
		node[pos=0.01,below] {1} node[pos=1,below] {$\frac{r_0}{r_0+1}$};
		\draw[shorten <=1.2pt,shorten >=1.2pt] (8.47,0) to node[above] {$\Delta^{P^-}_{r_0+1}$} (5.5,0);
		\draw[dashed, {Circle[open,scale=.7]}-] (5.55,0) -- (3.05,0)
		node[pos=1,below] {$\frac{r_0}{r_0+2}$};
		\draw[shorten <=1.2pt,shorten >=1.2pt] (5.5,0) to node[above] {$\Delta^{P^-}_{r_0+2}$} (3,0);
		\draw[dashed, {Circle[open,scale=.7]}-] (3.05,0) -- (1.55,0)
		node[pos=1,below] {$\frac{r_0}{r_0+3}$};
		\draw[shorten <=1.2pt,shorten >=1.2pt] (3,0) to node[above] {$\Delta^{P^-}_{r_0+3}$} (1.5,0);
		\draw[dashed, {Circle[open,scale=.7]}-{Circle[open,scale=.7]}] (1.55,0) -- node[above=1] {$\ldots$} node[below=2.5] {$\cdots$} (0,0)
		node[pos=0.97,below] {0};
	\end{tikzpicture}
\end{center}
	\caption{\small $P^-$-cylinders of rank $1$ inside $(0,1]$, $r_0=\varphi_0$.}
\label{fig:3}
\end{figure}
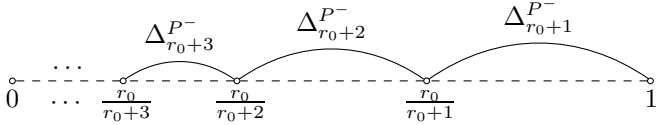

\begin{figure}[h]
\begin{center}
	\begin{tikzpicture}[out=145,in=35]
		\draw[dashed, {Circle[open,scale=.7]}-] (8.5,0) -- (5.55,0)
		node[pos=0,below] {$\sup\Delta^{P^-}_{c_1\ldots c_k}$};
		\draw[shorten <=1.2pt,shorten >=1.2pt] (8.47,0) to node[above] {$\Delta^{P^-}_{c_1\ldots c_k [r_k+1]}$} (5.5,0);
		\draw[dashed, {Circle[open,scale=.7]}-] (5.55,0) -- (3.05,0);
		\draw[shorten <=1.2pt,shorten >=1.2pt] (5.5,0) to node[above] {$\Delta^{P^-}_{c_1\ldots c_k [r_k+2]}$} (3,0);
		\draw[dashed, {Circle[open,scale=.7]}-] (3.05,0) -- (1.55,0);
		\draw[shorten <=1.2pt,shorten >=1.2pt] (3,0) to node[above] {$\Delta^{P^-}_{c_1\ldots c_k [r_k+3]}$} (1.5,0);
		\draw[dashed, {Circle[open,scale=.7]}-{Circle[open,scale=.7]}] (1.55,0) -- node[above=1] {$\ldots$} (0,0)
		node[pos=1,below] {$\inf\Delta^{P^-}_{c_1\ldots c_k}$};
		\draw[shorten <=2.5pt,shorten >=2.5pt][out=-10,in=-170] (0,0) to node[below] {$\Delta^{P^-}_{c_1\ldots c_k}$} (8.5,0);
	\end{tikzpicture}
\end{center}
	\caption{\small $P^-$-cylinders of \textbf{odd} rank $k+1$ inside the $P^-$-cylinder $\Delta^{P^-}_{c_1\ldots c_k}$ of \textbf{even} rank $k$, $r_k=\varphi_k(c_1,\ldots,c_k)$.}
\label{fig:4}
\end{figure}

\begin{figure}[h]
\begin{center}
	\begin{tikzpicture}[out=35,in=145]
		\draw[dashed, {Circle[open,scale=.7]}-] (0,0) -- (2.95,0)
		node[pos=0,below] {$\inf\Delta^{P^-}_{c_1\ldots c_k}$};
		\draw[shorten <=1.2pt,shorten >=1.2pt] (0.03,0) to node[above] {$\Delta^{P^-}_{c_1\ldots c_k [r_k+1]}$} (3,0);
		\draw[dashed, {Circle[open,scale=.7]}-] (2.95,0) -- (5.45,0);
		\draw[shorten <=1.2pt,shorten >=1.2pt] (3,0) to node[above] {$\Delta^{P^-}_{c_1\ldots c_k [r_k+2]}$} (5.5,0);
		\draw[dashed, {Circle[open,scale=.7]}-] (5.45,0) -- (6.95,0);
		\draw[shorten <=1.2pt,shorten >=1.2pt] (5.5,0) to node[above] {$\Delta^{P^-}_{c_1\ldots c_k [r_k+3]}$} (7,0);
		\draw[dashed, {Circle[open,scale=.7]}-{Circle[open,scale=.7]}] (6.95,0) -- node[above=1] {$\ldots$} (8.5,0)
		node[pos=1,below] {$\sup\Delta^{P^-}_{c_1\ldots c_k}$};
		\draw[shorten <=2.5pt,shorten >=2.5pt][out=-10,in=-170] (0,0) to node[below] {$\Delta^{P^-}_{c_1\ldots c_k}$} (8.5,0);
	\end{tikzpicture}
\end{center}
	\caption{\small $P^-$-cylinders of \textbf{even} rank $k+1$ inside \\ the $P^-$-cylinder $\Delta^{P^-}_{c_1\ldots c_k}$ of \textbf{odd} rank $k$, $r_k=\varphi_k(c_1,\ldots,c_k)$.}
\label{fig:5}
\end{figure}
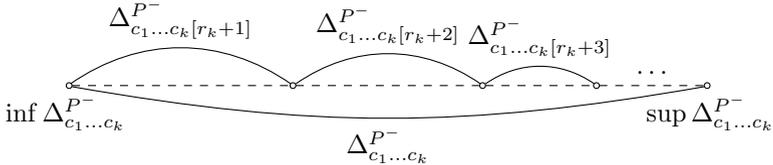

\section{The convergence of sequences in terms of $P$-representation}

Let us prove several propositions that establish a connection between the convergence of a numerical sequence and the $P$-digits of its elements.

Let $x_0=\Delta^P_{c_1 c_2 \ldots}\in(0,1]$, let $(x_n)_{n=1}^\infty$ be a sequence of numbers from $(0,1]\setminus\{x_0\}$, and let $k_n$ be the minimum number such that $p_{k_n}(x_n)\neq p_{k_n}(x_0)$. 

\begin{proposition}\label{lemmalim1P}
	If $\displaystyle\lim_{n\to\infty}k_n=\infty$, then $\displaystyle\lim_{n\to\infty}x_n=x_0$.
\end{proposition}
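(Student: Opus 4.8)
The plan is to bound $|x_n-x_0|$ by the diameter of the $P$-cylinder that $x_n$ and $x_0$ share, and then show this diameter vanishes. By the definition of $k_n$ we have $p_i(x_n)=p_i(x_0)=c_i$ for every $i\le k_n-1$, so both $x_n$ and $x_0$ lie in the same $P$-cylinder $\Delta^P_{c_1\ldots c_{k_n-1}}$ of rank $k_n-1$. Since every $P$-cylinder is an interval of the type $(a,b]$ by \cite{Moroz2024}, any two of its points differ by at most its diameter, so
\[
|x_n-x_0|\le |\Delta^P_{c_1\ldots c_{k_n-1}}|=:D_{k_n-1},
\]
where I write $D_m=|\Delta^P_{c_1\ldots c_m}|$ for the diameter of the rank-$m$ cylinder of $x_0$, with the convention $D_0:=1$.

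The crux is then to prove that $D_m\to 0$ as $m\to\infty$; once this is in hand, the hypothesis $\lim_{n\to\infty}k_n=\infty$ gives $k_n-1\to\infty$, hence $D_{k_n-1}\to 0$ and therefore $x_n\to x_0$. To obtain $D_m\to 0$ I would compare $D_m$ with the terms of the convergent series \eqref{1} that represents $x_0$. Writing $a_{m}$ for the $m$-th term of that series (evaluated at the digits $c_1,c_2,\ldots$ of $x_0$), a direct comparison of \eqref{cylPerron2} with \eqref{1} yields $D_m=a_{m-1}/(c_m-1)$. Since $c_m\ge r_{m-1}+1\ge 2$, this gives $D_m\le a_{m-1}$; and because the series \eqref{1} converges, its general term tends to $0$, so $a_{m-1}\to 0$ and consequently $D_m\to 0$.

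The step I expect to require the most care is precisely this passage $D_m\to 0$: it rests on the convergence of the defining series \eqref{1}, equivalently on the fact that the nested cylinders $\Delta^P_{c_1\ldots c_m}$ shrink to the single point $x_0$, their intersection being $\{x_0\}$ by uniqueness of the $P$-representation. I would also verify the harmless boundary behaviour when $k_n=1$, in which case the shared cylinder is all of $(0,1]$ and the estimate reads merely $|x_n-x_0|\le 1$; this does not affect the conclusion, since $k_n\to\infty$ forces $k_n\ge 2$ for all large $n$, so only the genuinely shrinking cylinders enter the limit.
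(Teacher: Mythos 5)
Your proof is correct and follows essentially the same route as the paper: both $x_n$ and $x_0$ lie in the common cylinder $\Delta^P_{c_1\ldots c_{k_n-1}}$, the difference is bounded by its diameter, and that diameter tends to zero. The only difference is that the paper cites \cite[Lemma 4]{Moroz2024} for the decay of cylinder diameters, whereas you derive it directly (and correctly) from the identity $D_m=a_{m-1}/(c_m-1)\le a_{m-1}$ and the convergence of the series \eqref{1}.
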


\begin{proof}
Assume, without loss of generality, that $k_n\geq 2$. Then $x_0$ and $x_n$ are contained in the same $P$-cylinder $\Delta^P_{c_1\ldots c_{k_n-1}}$ of rank $k_n-1$, and we have
	$$0<\left|x_0-x_n\right|<\left|\Delta^P_{c_1\ldots c_{k_n-1}}\right|.$$
Since $k_n\to\infty$, and given {\cite[Lemma 4]{Moroz2024}}, we have $\left|\Delta^P_{c_1\ldots c_{k_n-1}}\right|\to 0$ as $n\to\infty$. It follows that $x_n\to x_0$.
\end{proof}

The condition $\displaystyle\lim_{n\to\infty}k_n=\infty$ is sufficient for the convergence of the sequence $(x_n)_{n=1}^\infty$ to $x_0$. However, it is generally not necessary. There are several basic cases, each with its own necessary and sufficient conditions for convergence.

\begin{proposition}\label{lemmalim2P}
If $x_0$ is not the endpoint of some $P$-cylinder, then 	$$\displaystyle\lim_{n\to\infty}x_n=x_0\iff\lim_{n\to\infty}k_n=\infty.$$
\end{proposition}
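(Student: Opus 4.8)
The plan is to note that the implication $\Longleftarrow$ is already furnished by Proposition \ref{lemmalim1P}, which needs no extra hypothesis on $x_0$. Thus only the forward implication $\lim_{n\to\infty}x_n=x_0\Rightarrow\lim_{n\to\infty}k_n=\infty$ has to be established, and for this the assumption that $x_0$ is not an endpoint of any $P$-cylinder is essential. I would argue by contraposition: assume $k_n\not\to\infty$ and show that $(x_n)$ does not converge to $x_0$.

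If $k_n\not\to\infty$, then some value is not exceeded infinitely often, so there are a bound $M$ and a subsequence $(x_{n_j})$ with $k_{n_j}\le M$ for all $j$. By the definition of $k_{n_j}$ we have $p_i(x_{n_j})=c_i$ for $i<k_{n_j}$ but $p_{k_{n_j}}(x_{n_j})\neq c_{k_{n_j}}$, so $x_{n_j}\notin\Delta^P_{c_1\ldots c_{k_{n_j}}}$. Since $k_{n_j}\le M$ and the $P$-cylinders are nested, $\Delta^P_{c_1\ldots c_M}\subseteq\Delta^P_{c_1\ldots c_{k_{n_j}}}$, and therefore $x_{n_j}\notin\Delta^P_{c_1\ldots c_M}$ for every $j$. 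This reduces the obstruction to a single fixed cylinder.

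The key step is to convert this ``outside a fixed cylinder'' conclusion into a uniform distance bound. By {\cite[Lemma 3, Corollary 3]{Moroz2024}} the set $\Delta^P_{c_1\ldots c_M}$ is of the type $(a,b]$ with $a=\inf\Delta^P_{c_1\ldots c_M}$ and $b=\sup\Delta^P_{c_1\ldots c_M}$ given by \eqref{infPerron2} and \eqref{supPerron2}. Since $x_0$ belongs to this cylinder and, by hypothesis, is not an endpoint of any $P$-cylinder, we obtain the strict inequalities $a<x_0<b$. Put $\delta:=\min\{x_0-a,\,b-x_0\}>0$. Any point $y\in(0,1]\setminus\Delta^P_{c_1\ldots c_M}$ satisfies $y\le a$ or $y>b$, whence $|x_0-y|\ge\delta$; in particular $|x_0-x_{n_j}|\ge\delta$ for all $j$. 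Hence $x_{n_j}\not\to x_0$, so $(x_n)$ does not converge to $x_0$, which proves the contrapositive and thus the forward implication.

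The main obstacle is exactly the exploitation of the hypothesis: the strict interiority $a<x_0<b$ is what keeps $\delta$ bounded away from $0$. If $x_0$ coincided with an endpoint of $\Delta^P_{c_1\ldots c_M}$, points of a neighbouring cylinder could approach $x_0$ arbitrarily closely while keeping $k_n$ bounded, and the equivalence would break down. This is precisely the degenerate configuration excluded in the present statement and handled separately among the remaining basic cases.
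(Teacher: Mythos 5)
Your proposal is correct and follows essentially the same route as the paper: the backward implication is delegated to Proposition \ref{lemmalim1P}, and the forward one is proved by extracting a subsequence with $k_{n_j}$ bounded by $M$, noting $x_{n_j}\notin\Delta^P_{c_1\ldots c_M}$, and using the strict interiority $\inf\Delta^P_{c_1\ldots c_M}<x_0<\sup\Delta^P_{c_1\ldots c_M}$ (guaranteed by the non-endpoint hypothesis) to get a uniform positive lower bound on $|x_0-x_{n_j}|$. The only difference is cosmetic: you spell out the nestedness step that the paper dismisses as ``clear.''
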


\begin{proof}
	Given the Proposition \ref{lemmalim1P}, it suffices to prove that
	 $$\lim_{n\to\infty}x_n=x_0\implies\lim_{n\to\infty}k_n=\infty.$$
Assume the opposite; that is, $\displaystyle\lim_{n\to\infty}x_n=x_0$ and $\displaystyle\lim_{n\to\infty}k_n\neq\infty$. Then there exists a number $M$ and an infinite sequence $(n_t)_{t=1}^\infty$ such that $k_{n_t}<M$. Since $x_0$ is neither the infimum nor the supremum of any $P$-cylinder, we have 
	$$\inf\Delta^P_{c_1 \ldots c_M}<x_0=\Delta^P_{c_1 c_2 \ldots }<\sup\Delta^P_{c_1 \ldots c_M}.$$
It is clear that $x_{n_t}\notin\Delta^P_{c_1 \ldots c_{M}}$, and therefore
	$$\left|x_0-x_{n_t}\right|\geq \min\left\{x_0-\inf\Delta^P_{c_1 \ldots c_M},\sup\Delta^P_{c_1 \ldots c_M}-x_0 \right\}=\text{const}>0.$$
	Since the last inequality is true for an infinite number of indices $n_t$, then $x_n\nrightarrow x_0$. We have reached a contradiction. Therefore, $k_n\to\infty$.
\end{proof}

It follows from the geometry of the $P$-representation that the infimum of any $P$-cylinder is the supremum of the neighboring left $P$-cylinder of the same rank. %Therefore, each endpoint of any $P$-cylinder can be considered to be the supremum of some $P$-cylinder.
Also, the supremum of any $P$-cylinder, except for  $\Delta^P_{[r_0+1]}$, is the infimum of another $P$-cylinder, although not necessarily of the same rank. %Consequently, each endpoint of any $P$-cylinder, except for the number $1$, can be considered to be the infimum of some $P$-cylinder.

The following proposition provides the necessary and sufficient conditions for the sequence $(x_n)_{n=1}^\infty$ to converge to $x_0$ from the left, where $x_0$ is the supremum of a $P$-cylinder.

\begin{proposition}
	Let $x_0\in(0,1]$ be a supremum of a $P$-cylinder, and let the sequence $(x_n)_{n=1}^\infty$ be such that $0<x_n<x_0$ for all $n\in\mathbb{N}$. Then $$\displaystyle\lim_{n\to\infty}x_n=x_0\iff\lim_{n\to\infty}k_n=\infty.$$
\end{proposition}

\begin{proof}
The proof is similar to that of Proposition \ref{lemmalim2P}.
\end{proof}

The following proposition provides the necessary and sufficient conditions for the sequence $(x_n)_{n=1}^\infty$ to converge to $x_0$ from the right, where $x_0$ is the infimum of a $P$-cylinder.

\begin{proposition}
Let $x_0=\inf\Delta^P_{c_1\ldots c_k}$, and let the sequence $(x_n)_{n=1}^\infty$ be such that $x_0<x_n\leq 1$ for all $n\in\mathbb{N}$. Then   $\displaystyle\lim_{n\to\infty}x_n=x_0$ if and only if there exists a number $n_0$ such that
	\begin{gather}\label{systemconvergence}
		\begin{cases}
			p_i(x_n)=c_i \text{ for all } i\leq k \text{ and } n\geq n_0,\\
			\displaystyle\lim_{n\to\infty}p_{k+1}(x_n)=\infty.
		\end{cases}
	\end{gather}
\end{proposition}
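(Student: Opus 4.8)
The plan is to reduce everything to two elementary identities for the infimum and supremum of the rank-$(k+1)$ sub-cylinders of $\Delta^P_{c_1\ldots c_k}$, and then treat the two implications separately. Write $C=\frac{r_0\cdots r_k}{(c_1-1)c_1\cdots(c_k-1)c_k}>0$, a constant fixed once the base $c_1\ldots c_k$ is fixed. Since $x_0=\inf\Delta^P_{c_1\ldots c_k}$, formulas \eqref{infPerron2}, \eqref{supPerron2} and \eqref{cylPerron2} applied to the rank-$(k+1)$ cylinder $\Delta^P_{c_1\ldots c_k m}$ (with $c_{k+1}=m\ge r_k+1$) give, after cancelling the first $k$ summands against $x_0$,
$$\inf\Delta^P_{c_1\ldots c_k m}=x_0+\frac{C}{m},\qquad \sup\Delta^P_{c_1\ldots c_k m}=x_0+\frac{C}{m}+\frac{C}{(m-1)m}.$$
Both quantities tend to $x_0$ as $m\to\infty$, while for bounded $m\le M$ the infimum stays at distance at least $C/M$ from $x_0$; these are the only geometric facts I will use.

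For the sufficiency ($\Leftarrow$), assume \eqref{systemconvergence} holds with threshold $n_0$. For every $n\ge n_0$ the matching of the first $k$ digits means $x_n\in\Delta^P_{c_1\ldots c_k\, p_{k+1}(x_n)}$, hence, using the hypothesis $x_0<x_n\le 1$ together with the supremum identity above,
$$x_0<x_n\le \sup\Delta^P_{c_1\ldots c_k\, p_{k+1}(x_n)}=x_0+\frac{C}{p_{k+1}(x_n)}+\frac{C}{(p_{k+1}(x_n)-1)p_{k+1}(x_n)}.$$
Since $p_{k+1}(x_n)\to\infty$, the right-hand side tends to $x_0$, and the squeeze theorem yields $x_n\to x_0$.

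For the necessity ($\Rightarrow$), assume $x_n\to x_0$. Because $\Delta^P_{c_1\ldots c_k}$ is a half-open interval with left endpoint exactly $x_0=\inf\Delta^P_{c_1\ldots c_k}$ and $x_n>x_0$, the convergence forces $x_0<x_n<\sup\Delta^P_{c_1\ldots c_k}$ for all large $n$ (here $\sup\Delta^P_{c_1\ldots c_k}>x_0$ by \eqref{cylPerron2}); choosing $n_0$ past this point gives $x_n\in\Delta^P_{c_1\ldots c_k}$, i.e.\ $p_i(x_n)=c_i$ for all $i\le k$ and $n\ge n_0$, which is the first line of \eqref{systemconvergence}. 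To obtain the second line I argue by contradiction: if $p_{k+1}(x_n)\not\to\infty$, there is a bound $M$ and a subsequence $(n_t)$ with $r_k+1\le p_{k+1}(x_{n_t})\le M$; then the infimum identity gives $x_{n_t}>\inf\Delta^P_{c_1\ldots c_k\, p_{k+1}(x_{n_t})}\ge x_0+C/M$, keeping $x_{n_t}$ a fixed positive distance from $x_0$ and contradicting $x_{n_t}\to x_0$. Hence $p_{k+1}(x_n)\to\infty$, completing \eqref{systemconvergence}.

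The one step needing genuine care rather than routine computation is the necessity of the first line of \eqref{systemconvergence}: one must exploit that $x_0$ is the left endpoint (the infimum, not an interior point nor the supremum) of the cylinder, so that approach from the right with $x_n>x_0$ actually traps the $x_n$ inside $\Delta^P_{c_1\ldots c_k}$; the analogue for an interior point (Proposition \ref{lemmalim2P}) and for a supremum instead controls the other side of $x_0$. Everything else follows mechanically from the two displayed identities.
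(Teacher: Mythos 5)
Your proof is correct and follows essentially the same route as the paper: both rest on the identities $\inf\Delta^P_{c_1\ldots c_k m}-x_0=C/m$ and $\sup\Delta^P_{c_1\ldots c_k m}-x_0=C/(m-1)$ for the rank-$(k+1)$ sub-cylinders, with the squeeze argument for sufficiency and a bounded-digit subsequence kept at distance $\geq C/M$ for necessity. The only cosmetic difference is that you establish the first line of \eqref{systemconvergence} directly (trapping $x_n$ in the half-open cylinder $(x_0,\sup]$) where the paper argues by contradiction on a subsequence escaping the cylinder.
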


\begin{proof}
\emph{Sufficiency.} It follows from \eqref{systemconvergence} that  $$x_n\in\Delta^P_{c_1\ldots c_k p_{k+1}(x_n)}\subset\Delta^P_{c_1\ldots c_k}$$
for all $n\geq n_0$. Then $\inf\Delta^P_{c_1\ldots c_k}<x_n\leq\sup\Delta^P_{c_1\ldots c_k p_{k+1}(x_n)}$. Using {\cite[Corollary 1]{Moroz2024}}, which is one of the consequences of \eqref{infPerron2} and \eqref{supPerron2}, we obtain
	\begin{align*}
		0&<x_n-x_0\leq\sup\Delta^P_{c_1\ldots c_k p_{k+1}(x_n)}-\inf\Delta^P_{c_1\ldots c_k}\\
		&=\frac{r_0 r_1\cdots r_k}{(c_1-1)c_1\cdots(c_k-1)c_k(p_{k+1}(x_n)-1)}\to 0
	\end{align*}
as $p_{k+1}(x_n)\to \infty$, where $r_0=\varphi_0$ and $r_i=\varphi_i(c_1,\ldots,c_i)$ for all $i\leq k$. Then $x_n\to x_0$.

\emph{Necessity.} Assume that \eqref{systemconvergence} fails to hold for any $n_0$. We consider two possible cases and show that in each case $x_n\nrightarrow x_0$.

\emph{Case 1: there exists a sequence $(n_t)_{t=1}^\infty$ such that $p_{i_t}(x_{n_t})\neq c_{i_t}$ for some $i_t\leq k$ for all $t\in\mathbb{N}$.} In this case, $x_{n_t}\not\in\Delta^P_{c_1\ldots c_k}$. Then
	\begin{gather*}
		x_0=\inf\Delta^P_{c_1\ldots c_k}<\sup\Delta^P_{c_1\ldots c_k}<x_{n_t},\\
		x_{n_t}-x_0>\left|\Delta^P_{c_1\ldots c_k}\right|=\text{const}>0.
	\end{gather*}
Since the last inequality is true for an infinite number of indices $n_t$, then $x_n\nrightarrow x_0$.

\emph{Case 2: $p_i(x_n)=c_i$ for all $i\leq k$ and  $n\geq n_0$, but there exist a number $M$ and a sequence $(n_t)_{t=1}^\infty$ such that $p_{k+1}(x_{n_t})< M$.} Assume, without loss of generality, that $n_t\geq n_0$. Then, using {\cite[Corollary 1]{Moroz2024}}, we obtain:
	\begin{align*}
		x_{n_t}-x_0&\geq\inf\Delta^P_{c_1\ldots c_k p_{k+1}(x_{n_t})}-\inf\Delta^P_{c_1\ldots c_k}\\
		&=\frac{r_0 r_1\cdots r_k}{(c_1-1)c_1\cdots(c_k-1)c_k p_{k+1}(x_{n_t})}\\
		&>\frac{r_0 r_1\cdots r_k}{(c_1-1)c_1\cdots(c_k-1)c_k M}=\text{const}>0,
	\end{align*}
where $r_0=\varphi_0$ and $r_i=\varphi_i(c_1,\ldots,c_i)$ for all $i\leq k$. Since the last inequality is true for an infinite number of indices $n_t$, then $x_n\nrightarrow x_0$.

Therefore, the condition \eqref{systemconvergence} is necessary and sufficient for the conver\-gence of the sequence $(x_n)_{n=1}^\infty$ to $x_0$.
\end{proof}

Since the number $0$ does not have its $P$-representation, we will consider the convergence of the sequence $(x_n)_{n=1}^\infty$ to $0$ separately.

\begin{proposition}\label{lemmalim0P}
	$\displaystyle\lim_{n\to\infty}x_n=0\iff\lim_{n\to\infty}p_1(x_n)=\infty.$
\end{proposition}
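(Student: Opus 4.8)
```latex
The plan is to prove both implications using the geometry of the $P$-cylinders
of rank $1$, which partition $(0,1]$ as illustrated in Figure \ref{fig:1}. The
key observation is that $p_1(x_n)=m$ is equivalent to saying that $x_n$ lies in
the first-rank cylinder $\Delta^P_{m}$, and from \eqref{infPerron2} with $k=1$
we have $\inf\Delta^P_{m}=\frac{r_0}{m}$, so that $x_n\in\Delta^P_{m}$ forces
$0<x_n\leq\frac{r_0}{m}$ where $r_0=\varphi_0$. This single inequality, relating
the size of $x_n$ to the reciprocal of its first digit, is the entire engine of
the argument.

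\emph{Sufficiency.} Suppose $\lim_{n\to\infty}p_1(x_n)=\infty$. Writing
$m_n=p_1(x_n)$, the containment $x_n\in\Delta^P_{m_n}$ together with
$\sup\Delta^P_{m_n}=\frac{r_0}{m_n}$ (the right endpoint of the $m_n$th cylinder
in Figure \ref{fig:1}) gives the bound
$$0<x_n\leq\frac{r_0}{m_n}=\frac{\varphi_0}{p_1(x_n)}.$$
Since $p_1(x_n)\to\infty$ and $\varphi_0$ is a fixed constant, the right-hand
side tends to $0$, and by the squeeze theorem $x_n\to 0$.

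\emph{Necessity.} Suppose instead $\lim_{n\to\infty}x_n=0$, and I would argue by
contradiction that $p_1(x_n)\to\infty$. If this fails, then there is a constant
$M$ and an infinite subsequence $(n_t)_{t=1}^\infty$ with $p_1(x_{n_t})\leq M$.
For each such index, $x_{n_t}$ lies in one of the finitely many cylinders
$\Delta^P_{m}$ with $\varphi_0+1\leq m\leq M$, so
$$x_{n_t}>\inf\Delta^P_{M}=\frac{r_0}{M}=\frac{\varphi_0}{M}=\text{const}>0,$$
using that $\inf\Delta^P_{m}$ is increasing in $m$ (the cylinders accumulate at
$0$ as the index grows, see Figure \ref{fig:1}) so the smallest infimum among
the admissible digits is $\frac{\varphi_0}{M}$. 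This uniform positive lower
bound along an infinite subsequence contradicts $x_n\to 0$. Hence
$p_1(x_n)\to\infty$.

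The only subtlety, and the step I would flag as requiring care, is the direction
of the monotonicity in the necessity argument: because the first-rank cylinders
are ordered with $\Delta^P_{m}$ lying to the \emph{right} of
$\Delta^P_{m+1}$ and all of them accumulating at $0$, larger digits correspond
to smaller numbers, so bounding the digit above by $M$ bounds $x_{n_t}$
\emph{below} by $\frac{\varphi_0}{M}$ rather than above. Everything else is the
now-familiar template from the preceding propositions: extract a bad
subsequence, produce a uniform constant gap from $x_0=0$, and invoke that an
infinite number of violations precludes convergence.
```
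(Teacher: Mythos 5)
Your overall strategy is the same as the paper's: the paper's entire proof is the two-sided bound $\frac{r_0}{p_1(x_n)}<x_n\leq\frac{r_0}{p_1(x_n)-1}$, i.e.\ exactly the rank-one cylinder endpoints you are trying to use, and your necessity argument (bad subsequence with bounded first digit, uniform positive lower bound, contradiction) is sound. However, the sufficiency half as written rests on a false inequality. You state both $\inf\Delta^P_{m}=\frac{r_0}{m}$ and $\sup\Delta^P_{m}=\frac{r_0}{m}$, and from the latter you conclude $0<x_n\leq\frac{r_0}{m_n}$. In fact $\frac{r_0}{m}$ is the \emph{infimum} (the excluded left endpoint) of $\Delta^P_{m}$, while by \eqref{supPerron2} the supremum is $\frac{r_0}{m}+\frac{r_0}{(m-1)m}=\frac{r_0}{m-1}$; this is visible in Figure~\ref{fig:1}, where $\sup\Delta^P_{r_0+1}=1=\frac{r_0}{(r_0+1)-1}$. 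So membership in $\Delta^P_{m_n}$ gives $x_n>\frac{r_0}{m_n}$: your claimed upper bound is actually a strict lower bound. The repair is immediate and costs nothing — replace the bound by $0<x_n\leq\frac{r_0}{m_n-1}\to 0$ — but the inequality you wrote is wrong, not merely unjustified.

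A second, smaller slip: in the necessity part you justify $x_{n_t}>\frac{r_0}{M}$ by saying $\inf\Delta^P_{m}=\frac{r_0}{m}$ is \emph{increasing} in $m$. It is decreasing in $m$ (the cylinders accumulate at $0$, as you yourself note in your closing paragraph), and it is precisely this decrease that makes $\frac{r_0}{M}$ the smallest infimum among digits $m\leq M$. Your conclusion there is correct; the stated monotonicity contradicts it. Neither issue reflects a wrong approach — once the supremum is corrected to $\frac{r_0}{m-1}$ and the monotonicity reversed, your proof coincides with the paper's.
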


\begin{proof}
This proposition follows from the inequality $0<\frac{r_0}{p_1(x_n)}<x_n\leq\frac{r_0}{p_1(x_n)-1},$ where $r_0=\varphi_0$.
\end{proof}

%The above propositions are sufficient to verify the convergence of any sequence defined in terms of positive Perron expansion. If a sequence contains infinitely many elements on both sides of the limit, which is the endpoint of some $P$-cylinder, it is sufficient to consider two corresponding subsequences and apply the relevant proposition to each of them.

\section{The convergence of sequences in terms of $P^-$-representation}

Not every $x\in(0,1)$ has its $P^-$-representation. Therefore, we will assume that all numbers in the sequence $(x_n)_{n=1}^\infty$ belong to $(0,1)\setminus IS^{P^-}$. However, the limit of such a sequence can be any number from $[0,1]$. Because of this, we need to consider several cases.

Let $x_0=\Delta^{P^-}_{c_1 c_2 \ldots}\in(0,1)\setminus IS^{P^-}$, let $(x_n)_{n=1}^\infty$ be a sequence of numbers from $(0,1)\setminus\left(IS^{P^-}\cup\{x_0\}\right)$, and let $k_n$ be the minimum number such that $q_{k_n}(x_n)\neq q_{k_n}(x_0)$.

\begin{proposition}\label{lemmalim1Palt}
	$\displaystyle\lim_{n\to\infty}x_n=x_0\iff\lim_{n\to\infty}k_n=\infty.$
\end{proposition}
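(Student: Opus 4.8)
The plan is to mirror the two-directional argument used for Propositions \ref{lemmalim1P} and \ref{lemmalim2P}, exploiting the fact that the hypothesis $x_0\in(0,1)\setminus IS^{P^-}$ forces $x_0$ to lie strictly interior to every $P^-$-cylinder that contains it. Indeed, since $IS^{P^-}$ is precisely the set of infima and suprema of all $P^-$-cylinders, the assumption $x_0\notin IS^{P^-}$ gives $\inf\Delta^{P^-}_{c_1\ldots c_k}<x_0<\sup\Delta^{P^-}_{c_1\ldots c_k}$ for every $k$. This strict interiority is the $P^-$-analogue of the hypothesis ``$x_0$ is not an endpoint of a $P$-cylinder'' used in Proposition \ref{lemmalim2P}, and it is exactly what removes the need for the case analysis required in the positive setting, thereby allowing a single clean equivalence for all admissible $x_0$.

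For the implication $\lim_{n\to\infty}k_n=\infty\Rightarrow\lim_{n\to\infty}x_n=x_0$, I would argue as in Proposition \ref{lemmalim1P}. Assuming without loss of generality that $k_n\geq 2$, the numbers $x_0$ and $x_n$ share their first $k_n-1$ digits, so both belong to the cylinder $\Delta^{P^-}_{c_1\ldots c_{k_n-1}}$, whence $0<|x_0-x_n|<|\Delta^{P^-}_{c_1\ldots c_{k_n-1}}|$. By formula \eqref{cylalternatingPerron2} the diameter of a $P^-$-cylinder coincides with that of the corresponding $P$-cylinder given by \eqref{cylPerron2}, so {\cite[Lemma 4]{Moroz2024}} applies verbatim and these diameters tend to $0$ as $k_n\to\infty$. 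Hence $x_n\to x_0$.

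For the converse $\lim_{n\to\infty}x_n=x_0\Rightarrow\lim_{n\to\infty}k_n=\infty$, I would argue by contradiction exactly as in the necessity part of Proposition \ref{lemmalim2P}. If $k_n\nrightarrow\infty$, choose a number $M$ and an infinite subsequence $(n_t)_{t=1}^\infty$ with $k_{n_t}<M$. Then $x_{n_t}$ disagrees with $x_0$ in some digit of index $\leq M$, so $x_{n_t}\notin\Delta^{P^-}_{c_1\ldots c_M}$. Combining this with the strict interiority of $x_0$ yields the fixed lower bound $|x_0-x_{n_t}|\geq\min\{x_0-\inf\Delta^{P^-}_{c_1\ldots c_M},\,\sup\Delta^{P^-}_{c_1\ldots c_M}-x_0\}=\mathrm{const}>0$ for infinitely many $t$, contradicting $x_n\to x_0$.

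The main point requiring care — and the only place where the alternating case genuinely differs from the positive one — is the step that turns $x_{n_t}\notin\Delta^{P^-}_{c_1\ldots c_M}$ into the statement that $x_{n_t}$ lies strictly outside the closed interval $[\inf\Delta^{P^-}_{c_1\ldots c_M},\sup\Delta^{P^-}_{c_1\ldots c_M}]$. In the positive setting the cylinders are half-open intervals $(a,b]$ that genuinely partition the line, so ``outside the cylinder'' is immediate; in the alternating setting the cylinders are punctured intervals $(a,b)\setminus IS^{P^-}$ whose relative arrangement alternates with the parity of the rank (Figures \ref{fig:4}--\ref{fig:5}). I expect this to be routine once one invokes $x_{n_t}\notin IS^{P^-}$ to exclude the endpoints, but it is the spot where the bookkeeping must be done correctly; the diameter estimate and the contradiction argument then carry over without further modification.
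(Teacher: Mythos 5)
Your proposal is correct and matches the paper's intent exactly: the paper's own proof is just the remark that the argument is ``similar to that of Proposition \ref{lemmalim1P} and Proposition \ref{lemmalim2P}'', and your write-up carries out precisely that combination, with the right observation that $x_0\notin IS^{P^-}$ supplies the strict interiority needed for the necessity direction and that $x_{n_t}\notin IS^{P^-}$ is what upgrades ``not in the cylinder'' to ``outside the open interval $(\inf,\sup)$''. No gaps.
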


\begin{proof}
	The proof is similar to that of Proposition \ref{lemmalim1P} and Proposition \ref{lemmalim2P}.
\end{proof}

Consider the cases when the sequence $(x_n)_{n=1}^\infty$ converges to a number that does not have its $P^-$-representation.

\begin{proposition}
	$\displaystyle\lim_{n\to\infty}x_n=0\iff\lim_{n\to\infty}q_1(x_n)=\infty$.
\end{proposition}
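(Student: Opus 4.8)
The plan is to mimic the structure of Proposition \ref{lemmalim0P}, which handled convergence to $0$ in the positive case, since the alternating series for small $x$ should behave analogously up to sign considerations. The key observation is that the first digit $q_1(x_n)$ controls which rank-$1$ cylinder $x_n$ lives in, and the rank-$1$ cylinders near $0$ are precisely those with large first digit. So I expect convergence to $0$ to be equivalent to the first digit tending to infinity, exactly as in the positive case.

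First I would establish a two-sided bound on $x_n$ in terms of $q_1(x_n)$ alone. From the $P^-$-series \eqref{2}, the leading term is $\frac{r_0}{q_1-1}$ and the entire tail (starting with the $n=1$ term, which carries a minus sign) is subtracted, so $x_n < \frac{r_0}{q_1(x_n)-1}$. For the lower bound, I would use the cylinder geometry directly: every $x_n$ with first digit $q_1(x_n)=c$ lies in the rank-$1$ cylinder $\Delta^{P^-}_{c}$, and Figure \ref{fig:3} shows these cylinders are nested toward $0$ as $c$ increases, with $\inf\Delta^{P^-}_{c} = \frac{r_0}{r_0+m}$-type endpoints. Concretely, one has $\frac{r_0}{q_1(x_n)} < x_n$ from the fact that $x_n$ exceeds the infimum of its own rank-$1$ cylinder. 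This yields the sandwich
\begin{equation*}
	0 < \frac{r_0}{q_1(x_n)} < x_n < \frac{r_0}{q_1(x_n)-1},
\end{equation*}
where $r_0=\varphi_0$, which is the direct analogue of the inequality used in Proposition \ref{lemmalim0P}.

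With this sandwich in hand, both implications follow immediately. If $q_1(x_n)\to\infty$, the right-hand bound $\frac{r_0}{q_1(x_n)-1}\to 0$ forces $x_n\to 0$. Conversely, if $x_n\to 0$, then since $x_n > \frac{r_0}{q_1(x_n)}$ we must have $\frac{r_0}{q_1(x_n)}\to 0$, hence $q_1(x_n)\to\infty$ because $r_0$ is a fixed positive constant. I would then note that the endpoint set $IS^{P^-}$ causes no trouble here, since every $x_n$ is assumed to avoid it and $0\notin(0,1)$ is handled separately as a limit point.

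The main obstacle I anticipate is pinning down the lower bound $\frac{r_0}{q_1(x_n)} < x_n$ with full rigor, because the alternating sign in \eqref{2} means one cannot simply drop tail terms as in the monotone positive case; the tail $-\sum_{n\ge 1}(\cdots)$ is negative, so one must instead argue via the cylinder infimum using \eqref{inf1alternatingPerron2} at rank $k=1$ (odd rank), where $\inf\Delta^{P^-}_{c} = \sup\Delta^{P^-}_{c} - |\Delta^{P^-}_{c}| = \frac{r_0}{c-1} - \frac{r_0}{(c-1)c} = \frac{r_0}{c}$. Once this explicit endpoint value is extracted from the rank-$1$ specialization of the cylinder formulas, the lower bound is exact rather than merely asymptotic, and the rest of the argument is routine. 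Consequently I would present the proof as a one-line consequence of the displayed sandwich, exactly in the spirit of Proposition \ref{lemmalim0P}.
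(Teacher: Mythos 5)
Your proof is correct and follows the same route as the paper, which simply notes that the argument mirrors Proposition \ref{lemmalim0P}: you derive the rank-one sandwich $\frac{r_0}{q_1(x_n)}=\inf\Delta^{P^-}_{q_1(x_n)}<x_n<\sup\Delta^{P^-}_{q_1(x_n)}=\frac{r_0}{q_1(x_n)-1}$ from \eqref{sup1alternatingPerron2}--\eqref{inf1alternatingPerron2} at $k=1$ and read off both implications. Your care in obtaining the lower bound from the cylinder infimum rather than by dropping tail terms of the alternating series is exactly the right adjustment, and nothing further is needed.
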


\begin{proof}
	The proof is similar to that of Proposition \ref{lemmalim0P}.
\end{proof}

It follows from the geometry of the $P^-$-representation that every number from $IS^{P^-}$ is the supremum for some $P^-$-cylinder of odd rank. In particular, it follows from \eqref{sup1alternatingPerron2}--\eqref{sup2alternatingPerron2} that
\begin{itemize}
\item $\inf\Delta^{P^-}_{c_1 \ldots c_k}=\sup\Delta^{P^-}_{c_1 \ldots c_{k-1} [c_k+1]}$ if $k$ is odd,
\item $\sup\Delta^{P^-}_{c_1 \ldots  c_k}=\sup\Delta^{P^-}_{c_1 \ldots c_{k} [r_k+1]}$ if $k$ is even, $r_k=\varphi_k(c_1,\ldots,c_k)$,
\item $\inf\Delta^{P^-}_{c_1 \ldots  c_k}=\inf\Delta^{P^-}_{c_1 \ldots  c_{k-1}}=\sup\Delta^{P^-}_{c_1 \ldots  [c_{k-1}+1]}$ if $k$ is even and $c_k=\varphi_{k-1}(c_1,\ldots,c_{k-1})+1$,
\item $\inf\Delta^{P^-}_{c_1 \ldots  c_k}=\sup\Delta^{P^-}_{c_1 \ldots  c_{k-1}[c_k-1]}=\sup\Delta^{P^-}_{c_1 \ldots c_{k-1}[c_k-1] [r_k+1]}$ if $k$ is even and $c_k>\varphi_{k-1}(c_1,\ldots,c_{k-1})+1$, $r_k=\varphi_k(c_1,\ldots,c_{k-1},c_k-1)$.
\end{itemize}

\begin{proposition}\label{lemmalim2Palt}
	Let $x_0=\sup \Delta^{P^-}_{c_1\ldots c_k}$, where $k$ is odd, and let the sequence $(x_n)_{n=1}^\infty$ be such that $0<x_n<x_0$ for all $n\in\mathbb{N}$. Then $\displaystyle\lim_{n\to\infty}x_n=x_0$ if and only if there exists a number $n_0$ such that
	\begin{gather}\label{systemconvergencealt1}
		\begin{cases}
			q_i(x_n)=c_i \text{ for all } i\leq k \text{ and } n\geq n_0,\\
			\displaystyle\lim_{n\to\infty}q_{k+1}(x_n)=\infty.
		\end{cases}
	\end{gather}
\end{proposition}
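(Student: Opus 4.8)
The plan is to follow the template of the $P$-representation proposition on convergence to $\inf\Delta^P_{c_1\ldots c_k}$ from the right, transported to the alternating setting. The geometric fact that drives everything is that, since $k$ is odd, the rank-$(k+1)$ subcylinders $\Delta^{P^-}_{c_1\ldots c_k [r_k+j]}$ ($j=1,2,\ldots$) are laid out inside $\Delta^{P^-}_{c_1\ldots c_k}$ as in Figure~\ref{fig:5} and accumulate at $\sup\Delta^{P^-}_{c_1\ldots c_k}=x_0$ from the left as their $(k+1)$th digit grows. So approaching $x_0$ from below forces a sequence eventually to enter $\Delta^{P^-}_{c_1\ldots c_k}$ and then to run through subcylinders of ever-increasing $(k+1)$th digit, which is exactly what \eqref{systemconvergencealt1} encodes.

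For sufficiency I would assume \eqref{systemconvergencealt1}. For $n\ge n_0$ we have $x_n\in\Delta^{P^-}_{c_1\ldots c_k q_{k+1}(x_n)}\subset\Delta^{P^-}_{c_1\ldots c_k}$, hence $\inf\Delta^{P^-}_{c_1\ldots c_k q_{k+1}(x_n)}\le x_n<x_0$. Since $k+1$ is even I evaluate the infimum by \eqref{inf2alternatingPerron2} and $x_0=\sup\Delta^{P^-}_{c_1\ldots c_k}$ by \eqref{sup1alternatingPerron2}; the two sums share their first $k$ terms, so their difference collapses to a single term, and using $(-1)^k=-1$ this gives
$$0<x_0-x_n\le\sup\Delta^{P^-}_{c_1\ldots c_k}-\inf\Delta^{P^-}_{c_1\ldots c_k q_{k+1}(x_n)}=\frac{r_0\cdots r_k}{(c_1-1)c_1\cdots(c_k-1)c_k\,(q_{k+1}(x_n)-1)},$$
which tends to $0$ as $q_{k+1}(x_n)\to\infty$; hence $x_n\to x_0$.

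For necessity I negate \eqref{systemconvergencealt1} and split into the two cases of the $P$-proof. In Case~1 a subsequence $(x_{n_t})$ has some digit $q_{i_t}(x_{n_t})\ne c_{i_t}$ with $i_t\le k$, so $x_{n_t}\notin\Delta^{P^-}_{c_1\ldots c_k}$; combined with $x_{n_t}<x_0=\sup\Delta^{P^-}_{c_1\ldots c_k}$ this forces $x_{n_t}\le\inf\Delta^{P^-}_{c_1\ldots c_k}$, giving $x_0-x_{n_t}\ge|\Delta^{P^-}_{c_1\ldots c_k}|=\text{const}>0$ along the subsequence. In Case~2 the first $k$ digits stabilize but some subsequence has $q_{k+1}(x_{n_t})<M$; then $x_{n_t}\le\sup\Delta^{P^-}_{c_1\ldots c_k q_{k+1}(x_{n_t})}$, and subtracting this supremum (computed via \eqref{inf2alternatingPerron2}--\eqref{sup2alternatingPerron2}) from $x_0$ leaves $x_0-x_{n_t}\ge\frac{r_0\cdots r_k}{(c_1-1)c_1\cdots(c_k-1)c_k\,q_{k+1}(x_{n_t})}>\frac{r_0\cdots r_k}{(c_1-1)c_1\cdots(c_k-1)c_k\,M}=\text{const}>0$. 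Either case contradicts $x_n\to x_0$.

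The step I expect to require the most care is the sign bookkeeping in the alternating formulas: I must check that the parity assumption ``$k$ odd'' really places us in the Figure~\ref{fig:5} picture (accumulation at the supremum, not the infimum) and that $(-1)^k=-1$ is used consistently, so that every telescoped difference comes out positive. A secondary subtlety is the Case~1 claim that a point below $x_0$ but outside the cylinder must lie at or below its infimum; this relies on $x_0$ being the supremum of the cylinder together with the interval-type description of $P^-$-cylinders.
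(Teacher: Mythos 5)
Your proposal is correct and follows essentially the same route as the paper's own proof: the identical telescoping bound $\sup\Delta^{P^-}_{c_1\ldots c_k}-\inf\Delta^{P^-}_{c_1\ldots c_k q_{k+1}(x_n)}$ for sufficiency, and the same two-case analysis (a digit among the first $k$ differing infinitely often, versus a bounded subsequence of $(k+1)$th digits compared against $\sup\Delta^{P^-}_{c_1\ldots c_k q_{k+1}(x_{n_t})}$) for necessity. The sign bookkeeping you flag works out exactly as you anticipate, matching the paper's use of its Corollary 3.8.
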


\begin{proof}
	\emph{Sufficiency.} It follows from \eqref{systemconvergencealt1} that $$x_n\in\Delta^{P^-}_{c_1\ldots c_k q_{k+1}(x_n)}\subset\Delta^{P^-}_{c_1\ldots c_k}$$
	for all $n\geq n_0$. Then $\inf\Delta^{P^-}_{c_1\ldots c_k q_{k+1}(x_n)}<x_n<\sup\Delta^{P^-}_{c_1\ldots c_k}$. Using {\cite[Corollary 3.8]{Moroz2024PP}}, which is one of the consequences of the formulas \eqref{sup1alternatingPerron2}--\eqref{sup2alternatingPerron2}, we obtain 
	\begin{align*}
		0&<x_0-x_n<\sup\Delta^{P^-}_{c_1\ldots c_k}-\inf\Delta^{P^-}_{c_1\ldots c_k q_{k+1}(x_n)}\\
		&=\frac{r_0 r_1\cdots r_k}{(c_1-1)c_1\cdots(c_k-1)c_k(q_{k+1}(x_n)-1)}\to 0
	\end{align*}
	as $q_{k+1}(x_n)\to \infty$, where $r_0=\varphi_0$ and $r_i=\varphi_i(c_1,\ldots,c_i)$ for all $i\leq k$. Then $x_n\to x_0$.
	
	\emph{Necessity.} Assume that \eqref{systemconvergencealt1} fails to hold for any $n_0$. We consider two possible cases and show that in each case $x_n\nrightarrow x_0$. 
	
	\emph{Case 1: there exists a sequence $(n_t)_{t=1}^\infty$ such that $q_{i_t}(x_{n_t})\neq c_{i_t}$ for some $i_t\leq k$ for all $t\in\mathbb{N}$.} In this case, $x_{n_t}\not\in\Delta^{P^-}_{c_1\ldots c_k}$. Then
	\begin{gather*}
		x_{n_t}<\inf\Delta^{P^-}_{c_1\ldots c_k}<\sup\Delta^{P^-}_{c_1\ldots c_k}=x_0,\\
		x_0-x_{n_t}>\left|\Delta^{P^-}_{c_1\ldots c_k}\right|=\text{const}>0.
	\end{gather*}
	Since the last inequality is true for an infinite number of indices $n_t$, then $x_n\nrightarrow x_0$.
	
	\emph{Case 2: $q_i(x_n)=c_i$ for all $i\leq k$ and $n\geq n_0$, but there exist a number $M$ and a sequence $(n_t)_{t=1}^\infty$ such that $q_{k+1}(x_{n_t})< M$.} Assume, without loss of generality, that $n_t\geq n_0$. Then, using {\cite[Corollary 3.8]{Moroz2024PP}}, we obtain:
	\begin{align*}
		x_0-x_{n_t}&>\sup\Delta^{P^-}_{c_1\ldots c_k}-\sup\Delta^{P^-}_{c_1\ldots c_k q_{k+1}(x_{n_t})}\\
		&=\frac{r_0 \cdots r_k}{(c_1-1)c_1\cdots(c_k-1)c_k q_{k+1}(x_{n_t})}\\
		&>\frac{r_0\cdots r_k}{(c_1-1)c_1\cdots(c_k-1)c_kM}=\text{const}>0,
	\end{align*}
	where $r_0=\varphi_0$ and $r_i=\varphi_i(c_1,\ldots,c_i)$ for all $i\leq k$. Since the last inequality is true for an infinite number of indices $n_t$, then $x_n\nrightarrow x_0$.
	
	Therefore, the condition \eqref{systemconvergencealt1} is necessary and sufficient for the conver\-gence of the sequence $(x_n)_{n=1}^\infty$ to $x_0$.
\end{proof}

It follows from the geometry of the $P^-$-representation that every number from $IS^{P^-}$, except for the number $1$, is the infimum for some $P^-$-cylinder of even rank. In particular, it follows from \eqref{sup1alternatingPerron2}--\eqref{sup2alternatingPerron2} that
\begin{itemize}
	\item $\sup\Delta^{P^-}_{c_1 \ldots c_k}=\inf\Delta^{P^-}_{c_1 \ldots c_{k-1} [c_k+1]}$ if $k$ is even,
	\item $\inf\Delta^{P^-}_{c_1 \ldots  c_k}=\inf\Delta^{P^-}_{c_1 \ldots c_{k} [r_k+1]}$ if $k$ is odd, $r_k=\varphi_k(c_1,\ldots,c_k)$,
	\item $\sup\Delta^{P^-}_{c_1 \ldots  c_k}=\sup\Delta^{P^-}_{c_1 \ldots  c_{k-1}}=\inf\Delta^{P^-}_{c_1 \ldots  [c_{k-1}+1]}$ if $k$ is odd and $c_k=\varphi_{k-1}(c_1,\ldots,c_{k-1})+1$,
	\item $\sup\Delta^{P^-}_{c_1 \ldots  c_k}=\inf\Delta^{P^-}_{c_1 \ldots  c_{k-1}[c_k-1]}=\inf\Delta^{P^-}_{c_1 \ldots c_{k-1}[c_k-1] [r_k+1]}$ if $k$ is odd and $c_k>\varphi_{k-1}(c_1,\ldots,c_{k-1})+1$, $r_k=\varphi_k(c_1,\ldots,c_{k-1},c_k-1)$.
\end{itemize}

\begin{proposition}
	Let $x_0=\inf \Delta^{P^-}_{c_1\ldots c_k}$, where $k$ is even, and let the sequence $(x_n)_{n=1}^\infty$ be such that $x_0<x_n<1$ for all $n\in\mathbb{N}$. Then $\displaystyle\lim_{n\to\infty}x_n=x_0$ if and only if there exists a number $n_0$ such that
	\begin{gather}\label{systemconvergencealt3}
		\begin{cases}
			q_i(x_n)=c_i \text{ for all } i\leq k \text{ and } n\geq n_0,\\
			\displaystyle\lim_{n\to\infty}q_{k+1}(x_n)=\infty.
		\end{cases}
	\end{gather}
\end{proposition}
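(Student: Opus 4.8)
The plan is to follow the same two-part scheme (sufficiency, then necessity split into two cases) used in Proposition~\ref{lemmalim2Palt}, of which the present statement is the exact mirror image: there $x_0$ was the supremum of an \emph{odd}-rank cylinder and the sequence approached from the left, whereas here $x_0$ is the infimum of an \emph{even}-rank cylinder and the sequence approaches from the right. The guiding geometric observation (cf.\ Figure~\ref{fig:4}) is that since $k$ is even, the rank-$(k+1)$ subcylinders $\Delta^{P^-}_{c_1\ldots c_k m}$ have \emph{odd} rank and are arranged so that larger values of the $(k+1)$st digit sit closer to $\inf\Delta^{P^-}_{c_1\ldots c_k}=x_0$. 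Thus an approach $x_n\to x_0$ from the right ought to force the first $k$ digits to stabilize at $c_1,\ldots,c_k$ while $q_{k+1}(x_n)\to\infty$, which is precisely \eqref{systemconvergencealt3}.

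For sufficiency, assume \eqref{systemconvergencealt3}. For $n\ge n_0$ we have $x_n\in\Delta^{P^-}_{c_1\ldots c_k q_{k+1}(x_n)}\subset\Delta^{P^-}_{c_1\ldots c_k}$, hence $x_0=\inf\Delta^{P^-}_{c_1\ldots c_k}<x_n<\sup\Delta^{P^-}_{c_1\ldots c_k q_{k+1}(x_n)}$. I would then evaluate the upper gap: applying \eqref{inf2alternatingPerron2} to the even-rank cylinder $\Delta^{P^-}_{c_1\ldots c_k}$ and \eqref{sup1alternatingPerron2} to the odd-rank subcylinder $\Delta^{P^-}_{c_1\ldots c_k q_{k+1}(x_n)}$, all terms with $n\le k-1$ cancel and the surviving $n=k$ term carries sign $(-1)^k=+1$, so that
$$0<x_n-x_0<\sup\Delta^{P^-}_{c_1\ldots c_k q_{k+1}(x_n)}-\inf\Delta^{P^-}_{c_1\ldots c_k}=\frac{r_0\cdots r_k}{(c_1-1)c_1\cdots(c_k-1)c_k\bigl(q_{k+1}(x_n)-1\bigr)}\to 0$$
as $q_{k+1}(x_n)\to\infty$; therefore $x_n\to x_0$.

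For necessity, assume \eqref{systemconvergencealt3} fails for every $n_0$ and exhibit, in each of two cases, an infinite subsequence bounded away from $x_0$. In Case~1 some early digit disagrees infinitely often, i.e.\ $q_{i_t}(x_{n_t})\neq c_{i_t}$ with $i_t\le k$ along a subsequence, so $x_{n_t}\notin\Delta^{P^-}_{c_1\ldots c_k}$; since $x_{n_t}>x_0=\inf\Delta^{P^-}_{c_1\ldots c_k}$ and $x_{n_t}\notin IS^{P^-}$ while the cylinder has type $(a,b)\setminus IS^{P^-}$, this forces $x_{n_t}>\sup\Delta^{P^-}_{c_1\ldots c_k}$, whence $x_{n_t}-x_0>\bigl|\Delta^{P^-}_{c_1\ldots c_k}\bigr|=\text{const}>0$. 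In Case~2 the first $k$ digits stabilize but $q_{k+1}(x_{n_t})<M$ along a subsequence (WLOG $n_t\ge n_0$); using $x_{n_t}>\inf\Delta^{P^-}_{c_1\ldots c_k q_{k+1}(x_{n_t})}$ and subtracting the subcylinder diameter \eqref{cylalternatingPerron2} from the gap computed above, I get
$$x_{n_t}-x_0>\inf\Delta^{P^-}_{c_1\ldots c_k q_{k+1}(x_{n_t})}-\inf\Delta^{P^-}_{c_1\ldots c_k}=\frac{r_0\cdots r_k}{(c_1-1)c_1\cdots(c_k-1)c_k\,q_{k+1}(x_{n_t})}>\frac{r_0\cdots r_k}{(c_1-1)c_1\cdots(c_k-1)c_k\,M}=\text{const}>0.$$
In either case $x_n\nrightarrow x_0$, establishing necessity.

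The arithmetic is essentially identical to the $P$-representation infimum proposition; the only point requiring genuine care is the sign bookkeeping in the alternating series. The main (mild) obstacle is to confirm that the parity hypothesis ($k$ even, subcylinders of odd rank $k+1$) makes the leftover telescoped term positive, so that the orientation ``$q_{k+1}\to\infty\iff$ approach $x_0$ from the right'' is correct; once the parity is tracked, every estimate follows directly from \eqref{sup1alternatingPerron2}--\eqref{cylalternatingPerron2}.
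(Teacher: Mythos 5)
Your proposal is correct and is exactly the argument the paper intends: the paper's own proof is the single line ``similar to that of Proposition~\ref{lemmalim2Palt},'' and your write-up is the faithful mirror-image adaptation (even rank, approach from the right, the leftover telescoped term carrying sign $(-1)^k=+1$), with the sign bookkeeping and the $\frac{1}{m-1}-\frac{1}{(m-1)m}=\frac{1}{m}$ computation in Case~2 both checking out against \eqref{sup1alternatingPerron2}--\eqref{cylalternatingPerron2}.
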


\begin{proof}
	The proof is similar to that of Proposition \ref{lemmalim2Palt}.
\end{proof}

The above propositions are sufficient to determine the convergence of any sequence defined in terms of positive or alternating Perron expansions. If a sequence contains infinitely many elements on both sides of the limit, which is the endpoint of some $P$-cylinder or is a member of $IS^{P^-}$, it is sufficient to consider two corresponding subsequences and apply the relevant proposition to each of them. Also, to determine the continuity of a function at some point, the propositions about one-sided limits are quite sufficient.

\subsection*{Acknowledgements}
This work was supported by a grant from the Simons Foundation (1290607, M.M.).

%%%%%%%%%%%%%%%%%%%%%%%%%%%%%%%%%%%%%%%%%%%%%%%%

\end{document}